\newcommand{\KK}{\mathbb{K}}
\newcommand{\QQ}{\mathbb{Q}}
\newcommand{\RR}{\mathbb{R}}
\newcommand{\NN}{\mathbb{N}}
\newcommand{\coker}{\mathrm{coker}\kern.5pt}
\theoremstyle{plain}
\newtheorem{theorem}{Theorem}[section]
\newtheorem{corollary}[theorem]{Corollary}
\newtheorem{proposition}[theorem]{Proposition}
\newtheorem{lemma}[theorem]{Lemma}
\theoremstyle{definition}
\newtheorem{example}[theorem]{Example}
\newtheorem{remark}[theorem]{Remark}
\def \e {\epsilon}
\begin{document}

\title{Double series over a non-Archimedean field}

\author{L. Corgnier, C. Massaza, P. Valabrega}
 
\date{}

\maketitle

\section{Abstract}

 In the present paper we  investigate the convergence of a double series
over a complete non-Archimedean field  and prove that, while the  proofs  are somewhat different, the Archimedean results hold true.

\textbf{Keywords:}  non-Archimedean fields, power series, double series.
\textbf{MSC\,2010:} 12J15.

\section{Introduction}

Double series, in particular double power series, over the real (or complex) field, i.e. under archimedean hypotheses, have been thoroughly investigated. The main results on the reordering of such a series can be found in \cite{Goursat}, \textsection 169, where it is proved that a double series over the field of real numbers, under suitable conditions of absolute convergence, can be arranged in (infinitely) many ways, without changing its sum. The proof is strongly based on the least upper bound property, which fails to be true over a non-Archimedean (Cauchy complete) ordered field.

The aim of this paper is the investigation of such series over a complete non-Archimedean field. Indeed we can show that, with an approach that does not involve any least upper bound property, the classical results hold true.

Since the rearranging of the terms of a double series reminds us of the famous Fubini Theorem (concerning double integrals), we use the term Fubini-type Theorem.

We investigate in particular the substitution of a power series into another power series, an operation that gives rise to a double series, so that the stated results can be applied to this case. We also show by an example  that, if our conditions of absolute convergence  fail to be true, then the reordering is not allowed. 

When we substitute a power series $S(X)$ into another power series $T(X)$, actually two different object have to be considered: the power series $T(S(X))$, whose coefficients are formally obtained by algebraic manipulation, and the composite function $T\circ S$. It is not obvious that these two objects lead to the same domain of definition. In fact we can show by suitable examples that the power series $T(S(X))$ may have a domain which is either strictly larger or striclty smaller than the domain of  $T \circ S$.

Our results can also be applied to produce a concrete example of a power series having a well-identified non-algebraic zero.

\section{Preliminaries and notation}

$\KK$ is a complete non-Archimedean field with a countable basis for the neighbourhoods of $0$ (see \cite {Kelley} ).  In some examples we will consider the complete non-Archimedean ordered field $\KK = \widehat{\QQ(\e)}$, where $\e$ is infinitesimal (see ...). 

 A series $\sum_{n = 0}^{\infty}c_n$ converges to $c \in \KK$ if the sequence of the partial sums converges to $c$. The series is absolutely convergent if $\sum_{n = 0}^{\infty}|c_n|$ is a converging series. It is known (\cite {Massaza},...) that $\sum_{n = 0}^{\infty} c_n$ is convergent if and only if $\lim c_n = 0$, and therefore a series is convergent if and only if it is absolutely convergent (which is false over  $\RR$).
 
 Obviously the sum term by term of two converging series is also a converging series, whose sum is the sum of the two series. Moreover, if we arbitrarily associate the terms of a converging series, we obtain a new series that converges to the same sum (we are considering a subsequence of the sequence of partial sums).
 
 From now on $\sum c_n$ means $\sum_{n = 0}^{\infty} c_n$.
 
 \begin{lemma} \label{Splitting}

The sum of every convergent series  can be obtained as the  difference of two converging  series, each one with non-negative terms.
\end{lemma}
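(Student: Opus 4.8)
The plan is to mimic the classical decomposition of a series into its positive and negative parts, while exploiting the non-Archimedean convergence criterion recalled just above: a series $\sum c_n$ converges if and only if $\lim c_n = 0$. Over $\RR$ the naive splitting $c_n = \max(c_n,0) - \max(-c_n,0)$ fails exactly for the conditionally convergent series, since then both parts diverge; in the present setting that obstruction evaporates, because convergence already coincides with absolute convergence.

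Concretely, I would set $a_n = \max(c_n, 0)$ and $b_n = \max(-c_n, 0)$, so that $a_n \ge 0$, $b_n \ge 0$, and $c_n = a_n - b_n$ for every $n$. Each of $a_n$ and $b_n$ equals either $c_n$ or $0$, so $0 \le a_n \le |c_n|$ and $0 \le b_n \le |c_n|$, whence $|a_n| \le |c_n|$ and $|b_n| \le |c_n|$. Since $\sum c_n$ converges we have $\lim c_n = 0$, i.e. $|c_n| \to 0$; the two inequalities then give $\lim a_n = \lim b_n = 0$. Applying the criterion in the converse direction, both $\sum a_n$ and $\sum b_n$ converge, and since the termwise difference of convergent series converges to the difference of the sums (recalled in the preliminaries), $\sum c_n = \sum a_n - \sum b_n$ is the required representation.

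I do not anticipate a real obstacle: the whole weight of the Archimedean theorem has been shifted onto the equivalence \textit{convergent} $\Leftrightarrow \lim c_n = 0$. The only detail worth an explicit line is the compatibility of the order with the absolute value, namely that $x \ge 0$ implies $|x| = x$, which is what licenses the squeeze $|a_n| \le |c_n| \to 0$; this is immediate for the absolute value induced by the ordering of $\KK$.
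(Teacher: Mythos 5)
Your proof is correct and is essentially the paper's own argument: your decomposition $a_n=\max(c_n,0)$, $b_n=\max(-c_n,0)$ coincides identically with the paper's $c_n^{+}=\frac{|c_n|+c_n}{2}$, $c_n^{-}=\frac{|c_n|-c_n}{2}$, and both proofs rest on the criterion that a series over $\KK$ converges if and only if its terms tend to $0$. If anything, you spell out the convergence of the two non-negative series (via $0\le a_n,b_n\le|c_n|\to 0$) more explicitly than the paper does.
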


\begin{proof}
 Assume that $\sum c_n$ is a converging series. Let us set: $$c_n^+ = \frac{|c_n|+c_n}{2}, c_n^- = \frac{|c_n|-c_n}{2}$$  Then $\sum c_n^+, \sum c_n^-$ are converging series with non-negative terms and, using the above property on sum of series, $$\sum c_n = \sum c_n^+ - \sum c_n^-$$
 
 \end{proof}
 
 A {\it{reordering}} of the series $C = \sum c_n$ is by definition a series $\sum d_n$, where $d_n = c_{f(n)}, \forall n \in \NN$,  $f: \NN \to \NN$  being a one-to-one function .
 
 \begin{proposition} \label {Reordering}

Let $C = \sum c_n$ be a converging series. Then every series obtained from $C$ by a reordering of its terms is still a converging series having the same sum. 
\end{proposition}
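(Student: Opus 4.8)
The key fact I would lean on is the one stated right before Lemma~\ref{Splitting}: over $\KK$, a series converges if and only if its general term tends to $0$. This is the crucial non-Archimedean simplification, and my plan is to reduce the whole statement to checking that the general term of the reordered series still goes to $0$, together with an identification of the limit of its partial sums.

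So first I would establish convergence of the reordering $\sum d_n = \sum c_{f(n)}$. Since $C$ converges, we have $\lim_n c_n = 0$, i.e. for every neighbourhood $U$ of $0$ (using the countable basis hypothesis, equivalently for every value of the absolute value) there is $N$ such that $|c_n|$ is small for $n \ge N$. The set $f^{-1}(\{0,1,\dots,N-1\})$ is finite because $f$ is one-to-one, so there is $M$ with $f(n) \ge N$ for all $n \ge M$; hence $|d_n| = |c_{f(n)}|$ is small for $n \ge M$, which shows $\lim_n d_n = 0$ and therefore $\sum d_n$ converges. This step is clean and is exactly where the non-Archimedean criterion saves us from any least-upper-bound argument.

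The more delicate step is to show the two sums agree. Here I would use Lemma~\ref{Splitting} to write $c_n = c_n^+ - c_n^-$ with non-negative terms, and correspondingly $d_n = d_n^+ - d_n^-$; since $f$ is a bijection of $\NN$, reordering commutes with taking positive and negative parts, so it suffices to treat a convergent series with \emph{non-negative} terms. For such a series I would compare partial sums directly: given $\varepsilon$ (a value of the absolute value), pick $N$ with $|\sum_{n\ge N} c_n|$ small, then choose $M$ large enough that $\{f(0),\dots,f(M-1)\} \supseteq \{0,\dots,N-1\}$, and estimate the difference $\sum_{n=0}^{M-1} d_n - \sum_{n=0}^{\infty} c_n$ as a sum of tail terms $c_k$ with $k \ge N$, each controlled by the ultrametric inequality. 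The partial sums of $\sum d_n$ thus approach the same limit as $\sum c_n$.

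The main obstacle I anticipate is precisely this comparison of partial sums: unlike the Archimedean case one cannot invoke monotone convergence or the supremum, so the argument must be made purely through the ultrametric (strong triangle) inequality $|x+y| \le \max(|x|,|y|)$, controlling a finite block of mismatched indices at a time rather than bounding an entire tail by a single supremum. Once the non-negative case is settled, linearity of limits of partial sums (the "sum term by term" remark in the preliminaries) recombines $d_n^+$ and $d_n^-$ to conclude $\sum d_n = \sum c_n$.
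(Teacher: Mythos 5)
Your proposal has the same skeleton as the paper's proof: reduce to series with non-negative terms via Lemma \ref{Splitting}, then identify the sum by comparing partial sums of the reordering with partial sums of the original series. Your preliminary step --- deducing convergence of $\sum d_n$ from $d_n = c_{f(n)} \to 0$, using injectivity of $f$ and the criterion stated in the preliminaries --- is correct, and is in fact a cleaner route to convergence than the paper's, which obtains convergence and the value of the sum simultaneously from the two-sided comparison.

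However, the mechanism you declare to be essential for the crucial estimate is not available here, and this is a genuine error. The field $\KK$ of this paper is a non-Archimedean \emph{ordered} field (the standing example is $\widehat{\QQ(\e)}$ with $\e$ infinitesimal): ``non-Archimedean'' refers to the order admitting elements larger than every integer, not to an ultrametric valuation. The absolute value is $|x| = \max(x,-x)$, valued in $\KK$ itself, and it satisfies only the ordinary triangle inequality; the strong triangle inequality $|x+y| \le \max(|x|,|y|)$ fails already for $|1+1| = 2 > \max(|1|,|1|)$. So no step can be ``controlled by the ultrametric inequality''; indeed, when the paper itself must absorb an unbounded number of small errors (proof of Theorem \ref{Fubini converse}), it divides by an infinite element $\omega$ rather than invoking any ultrametric bound. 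Fortunately your argument is repairable precisely because you have already reduced to $c_n \ge 0$: choosing $M$ so that $\{f(0),\dots,f(M-1)\} \supseteq \{0,\dots,N-1\}$, order comparison alone gives, for every $m \ge M$, $\sum_{k=0}^{N-1} c_k \le \sum_{n=0}^{m} d_n \le C$ (any finite subsum of non-negative terms is at most $C$), hence $0 \le C - \sum_{n=0}^{m} d_n \le \sum_{k \ge N} c_k$, and the right-hand side is a tail of a convergent series, hence arbitrarily small. This monotone sandwich is exactly the paper's argument; once you replace the ultrametric appeal by it, your proof and the paper's coincide.
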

\begin {proof}
We first assume that $c_n \ge 0, \forall n$. 

Let $f: \NN \to \NN$ be the one-to-one function that defines the reordering, so that we are considering  the new series $\sum d_n$ where $d_n =  c_{f(n)}$. It is easy to see that each partial sum of this series  $D_n = \sum_{i = 0}^n d_n$ is less than or equal to a suitable partial sum of the original series $C_m$, and conversely, which implies the result.

In general, it is enough to use the equality $\sum c_n = \sum c_n^+ - \sum c_n^-$ (see \ref{Splitting}) and the fact just proved.
\end{proof}
 \begin{remark} The statements of Lemma \ref{Splitting} and of Proposition \ref{Reordering} in the archimedean case hold true only for an absolutely converging series (see \cite{Goursat}, ...). 
 \end{remark}

\begin{remark} 
The proof of the above proposition shows that performing on a series any reordering  obtained  through a suitable bijection of 
$\NN$ is allowed. It is however not allowed to transform a series into a series of series, since the ordering in a series of series has an ordinal number $\omega^2$, which is not the same as the ordinal number $\omega$ of $\NN$. The transformation of a series into a series of series and conversely is studied in the following chapter.
\end{remark}

\begin{remark} \label{Comparison}
 If $\sum a_n$ is convergent and $|b_n| \le |a_n|, \forall n$, then also $\sum b_n$ is convergent. It is in fact enough to observe that $\lim |b_n| \le \lim |a_n|  = 0$.
\end{remark}

\section{Double series and  Fubini-type theorems}

Let $a_{ij}$ be any set of elements in $\KK$, where $(i,j) \in \NN^2$. Choose a bijection $f: \NN \to \NN^2$ and set: $f^{-1} = (\phi,\psi)$ = inverse function $\NN^2 \to \NN \ (i = \phi(k), j = \psi(k))$. Then it makes sense to consider the series $\sum_k a_{\phi(k),\psi(k)}$. If it is a converging (hence an absolutely converging) series, we set by definition: 
$$\sum a_{ij} = \sum a_{\phi(k),\psi(k)}$$  Since any reordering of the terms of a converging series is allowed (and does not change the sum), every bijection $f$, together with its inverse function, gives rise to the same sum of the series.

\begin{remark}
Since a double series is transformed by the definition into an ordinary simple series, all the properties of the preceding section apply to double series too.
\end{remark}

\begin{remark}

In the literature (\cite {Goursat}, \textsection 169) sometimes the definition of  convergence and sum of a double series is given in a different way (in the following we call it   "Goursat"). In order to verify that Goursat's definition   and ours are equivalent, we briefly recall \cite{Goursat}, \textsection 169 (where the series is  absolutely converging over $\RR$).

Consider the set $ \NN^2$, and a countable sequence $I_0, I_1, I_2, .... $ of finite subsets, satisfying the following two conditions:

1-  $I_n \subset I_{n+1}, \forall n$

2-  $\cup _nI_n = I$.

Now set: $\forall k = 0, 1 , \cdot \cdot \cdot, S_k = \sum a_{ij}, (i,j) \in I_k$.

 If this sequence is absolutely convergent, we say that  the double series is convergent to $\lim S_k$.
The definition is correct, since it is proved in (\cite {Goursat}, \textsection 169) that it  is independent on the chosen sequence of subsets, under the hypothesis, always satisfied in the non-Archimedean case, that the convergence is absolute.

Now consider, according to our definition of  a double series, a one-to-one map $f: \NN \to \NN^2$ and its inverse   $f^{-1}: \NN^2 \to \NN$ $((i,j) = f^{-1}(k)  = \phi(k), j = \psi(k))$. Define a Goursat sequence of subsets according to the following iterative rule:

$I_0 = \{(\phi(0),\psi(0))\}, I_{k+1} = I_k \cup \{(\phi(k+1),\psi(k+1))\}$.

With these choices it is readily seen that the sequence  $S_0,S_1,S_2,....$ coincides with the sequence of the partial sums of the series $\sum a_{\phi(k),\psi(k)}$, and therefore they are both convergent (to the same limit) or not convergent.

\end{remark}

We are now interested in the following two rearrangings  of the terms of the series $\sum a_{ij}$: $\sum_i(\sum_j a_{ij})$ and $\sum_j(\sum_i a_{ij})$. In fact what we are now going to perform is not a reordering, as introduced above: it does not depend on a one-to-one map $\NN \to \NN$,  since it involves $\NN^2$.  

If the terms of the double series form an infinite matrix:

$a_{00}  \ a_{01} \cdot \cdot \cdot$ 

$a_{10} \ a_{11} \cdot \cdot \cdot $

$\cdot \cdot \cdot$ 

$a_{i0} \ a_{i1} \cdot \cdot \cdot  \ a_{ij}  \cdot \cdot \cdot$

$\cdot \cdot \cdot ,$

the two rearrangings correspond to a sum by rows or by columns (both being series).

\begin{theorem}{\label{Fubini}} (Fubini-type theorem) Assume that $\sum a_{ij}$ is a converging series. Then, $\forall i, \sum_j a_{ij}$ and $\sum_i(\sum_j a_{ij})$ are convergent. Moreover $\sum_i(\sum_j a_{ij}) = \sum_{ij} a_{ij}$.
\end{theorem}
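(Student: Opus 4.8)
The plan is to reduce everything to the single non-Archimedean principle recalled in the preliminaries --- a series converges if and only if its general term tends to $0$ --- together with the ultrametric inequality $|x+y|\le\max(|x|,|y|)$, which over $\KK$ replaces every appeal to the least upper bound property used in the Archimedean proof. Write $S=\sum_{ij}a_{ij}$ for the value furnished by the definition (independent of the chosen bijection, by Proposition \ref{Reordering}). Convergence of $\sum a_{ij}$ means, through any bijection $f$, that $a_{\phi(k),\psi(k)}\to 0$; reformulated intrinsically, this says that for every $\e>0$ the set $F_\e=\{(i,j):|a_{ij}|\ge\e\}$ is finite. I would record this reformulation first, since it drives all three assertions.

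For the convergence of the rows I would argue by subsequences. Fixing $i$, the family $(a_{ij})_{j}$ is extracted from the null sequence $(a_{\phi(k),\psi(k)})_k$ by keeping the indices $k$ with $\phi(k)=i$, so it is itself a null sequence, and the criterion gives the convergence of $R_i:=\sum_j a_{ij}$. No ultrametric input is needed here; it is pure extraction from a sequence tending to $0$.

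Next I would show $R_i\to 0$, which yields the convergence of $\sum_i R_i$ again by the criterion. Here the ultrametric inequality enters: passing to the limit in $|\sum_{j=0}^J a_{ij}|\le\max_{j\le J}|a_{ij}|$ gives $|R_i|\le\sup_j|a_{ij}|$. Given $\e>0$, finiteness of $F_\e$ forces $|a_{ij}|<\e$ for all $j$ as soon as $i$ exceeds the finitely many first coordinates occurring in $F_\e$, whence $|R_i|\le\e$ for all large $i$.

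The heart is the equality $\sum_i R_i=S$, and this is the step I expect to be the main obstacle, since it must knit together the finite exhaustions underlying the definition of $S$ with the genuinely infinite inner sums $R_i$. I would fix $\e>0$ and first upgrade the tail estimate: taking $F\supseteq F_\e$ to be the finite image $\{(\phi(k),\psi(k)):k\le m_0\}$ of an initial segment, the ultrametric inequality gives $|S-\sum_{(i,j)\in G}a_{ij}|\le\e$ for every finite $G\supseteq F$, because the extra terms all have absolute value $<\e$. Then I would choose $I$ large enough that $F\subseteq\{0,\dots,I\}^2$ and that $|R_i|\le\e$ for $i>I$, and, for each of the finitely many rows $i\le I$, a common truncation height $J\ge I$ with $|R_i-\sum_{j=0}^J a_{ij}|\le\e$. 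The rectangle $B=\{0,\dots,I\}\times\{0,\dots,J\}$ contains $F$, so $|\sum_{(i,j)\in B}a_{ij}-S|\le\e$; on the other hand $|\sum_{i=0}^I R_i-\sum_{(i,j)\in B}a_{ij}|\le\max_{i\le I}|R_i-\sum_{j=0}^J a_{ij}|\le\e$. Combining, $|\sum_{i=0}^I R_i-S|\le\e$, and the bound $|R_i|\le\e$ for $i>I$ extends this to $|\sum_{i=0}^N R_i-S|\le\e$ for every $N\ge I$. As $\e$ was arbitrary, the partial sums of $\sum_i R_i$ converge to $S$, which is the assertion. The only delicate point throughout is that each appeal to ``the sum of small terms is small'' must be routed through the ultrametric inequality rather than the ordinary triangle inequality, so that the finiteness of $F_\e$ alone controls arbitrarily long partial sums.
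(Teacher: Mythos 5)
Your argument is built on the ultrametric inequality $|x+y|\le\max(|x|,|y|)$, and that inequality is simply not available in this paper's setting. Here $\KK$ is a complete non-Archimedean \emph{ordered} field, the absolute value is $|x|=\max(x,-x)$, an element of $\KK$ itself, and it satisfies only the ordinary triangle inequality: already $|1+1|=2>\max(|1|,|1|)$. (That $\KK$ is ordered, rather than a field with a real-valued ultrametric absolute value, is forced by the paper's own arguments: Lemma \ref{Splitting} forms $c_n^{+}=(|c_n|+c_n)/2$ inside $\KK$, the proof of the present theorem begins by assuming $a_{ij}\ge 0$, and the proof of Theorem \ref{Fubini converse} invokes an element $\omega\in\KK$ larger than every integer.) Consequently, every step of yours of the form ``a sum of terms each of absolute value $<\e$ has absolute value $\le\e$'' is false. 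Concretely, in $\KK=\widehat{\QQ(\e)}$ let $\eta$ be a positive infinitesimal and consider a convergent series whose first $N$ terms all equal $\eta/2$ ($N$ a fixed large integer) and whose later terms are $\eta^{n}$: every term is $<\eta$, yet the sum is approximately $N\eta/2$, far larger than $\eta$. This breaks your estimate $|R_i|\le\sup_j|a_{ij}|$ (where, moreover, the supremum need not exist, since $\KK$ lacks the least upper bound property --- the very difficulty the paper is addressing), your tail bound $|S-\sum_{(i,j)\in G}a_{ij}|\le\e$ for finite $G\supseteq F_\e$, and the rectangle comparison built on both.

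The skeleton of your proof is sound --- the reformulation that $F_\e=\{(i,j):|a_{ij}|\ge\e\}$ is finite for every $\e>0$, row convergence by extraction from a null sequence, and the rectangle comparison --- and it can be repaired, but by a different device: the one the paper itself uses in Theorem \ref{Fubini converse}. Fix a positive infinitesimal $\delta$ (equivalently $\delta=\omega^{-1}$ with $\omega$ larger than every integer); then $n\delta<1$ for every integer $n$, so any \emph{finite} sum of terms each of absolute value $<\delta\e$ has absolute value $<\e$, and this bound survives passage to the limit along partial sums. Running your argument with $F_{\delta\e}$ in place of $F_\e$, and with limits of finite partial sums in place of suprema, makes all three estimates legitimate. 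It is worth noting that the paper's own proof takes a different route: it first treats the case $a_{ij}\ge0$ purely with order arguments, trapping $\sum_{i=0}^{n}(\sum_j a_{ij})$ between $A-\e$ and $A$, and then reduces the general case to the positive one via Lemma \ref{Splitting}; your route, once repaired as above, handles arbitrary signs directly, which would be a genuine difference --- but as written the proof does not stand.
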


\begin{proof} Let us start with the case $a_{ij} \ge 0, \forall i,j$ and set: $A = \sum a_{ij}$. Obviously it holds:  $ \sum_{j = 0}^m a_{ij} \le A$ for every $i$.

On an Archimedean field this is enough to ensure the convergence of
 $ \sum a_{ij} $ for every $i$, because the partial sums are increasing and bounded above. In our non-Archimedean hypothesis, we choose any integer $i_0$ and set:

$b_{ij} = a_{ij}$ if $i = i_0$, $0$ otherwise.

Since $0 \le b_{ij} \le a_{ij}, \forall i,j, \sum b_{ij}$ is convergent, But $\sum b_{ij} = \sum_j a_{i_0j}$, so that $\sum_j a_{ij}$ is convergent for every $i$. From the inequality $\sum_{i = 0}^n(\sum_{j = 0}^m a_{ij}) \le A$ we obtain, when $m$ tends to $\infty$, $\sum_{i = 0}^n(\sum_j a_{ij}) \le A$. 

Neither convergence when $n$ tends to infinity nor equality can now be deduced, unlike it can be done in the Archimedean case. 

Therefore we proceed as follows. Given any $\e > 0$, it holds, if  $n$ is large enough: $\sum_{i = 0}^n(\sum_j a_{ij}) \ge \sum_{i = 0}^n(\sum_{j = 0}^n a_{ij}) \ge A-\e$, since, if $n$ is large enough, $\sum_{i = 0}^n(\sum_{j = 0}^n a_{ij})$ contains all the terms of any given partial sum of the double series $\sum_{ij} a_{ij} = A.$ The two inequalities $A-\e \le \sum_{i = 0}^n(\sum_j a_{ij}) \le A$ give the required result: $\sum_i(\sum_j a_{ij}) = A$.

If $a_{ij}$ is not positive for every pair $(i,j)$, it is enough to use $a^+_{ij}$ and $a^-_{ij}$ as introduced in Lemma \ref{Splitting} and perform some calculation using the already obtained result for series with non-negative terms.

\end{proof}

\begin{remark} Of course the Fubini-type theorem can be stated as follows: $\sum_i(\sum_j a_{ij}) = \sum_j(\sum_i a_{ij}) = \sum_{i,j} a_{ij}$ (the sum can be computed either by rows or by columns).
\end{remark}

A converse Fubini-type theorem can be stated as follows.

\begin{theorem} {\label{Fubini converse}} With the notation above, assume that both
$\sum_j a_{ij}, \forall j,$ and $\sum_i( \sum_j( |a_{ij}|)$ converge; then $\sum_{i,j} a_{ij}$ also converges, as well as $\sum_j( \sum_i a_{ij})$. Moreover it holds: $\sum_{i,j} a_{ij} = \sum_i( \sum_j a_{ij}) = \sum_j( \sum_i a_{ij})$
\end{theorem}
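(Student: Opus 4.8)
The plan is to reduce the whole statement to the direct Fubini-type theorem (Theorem \ref{Fubini}). Once I know that the double series $\sum_{i,j} a_{ij}$ converges, Theorem \ref{Fubini} and the remark following it give at once that $\sum_i(\sum_j a_{ij}) = \sum_{i,j} a_{ij}$; the equality $\sum_j(\sum_i a_{ij}) = \sum_{i,j} a_{ij}$ is then obtained by applying the same theorem to the transposed array $(a_{ji})$, which has the same double sum by the reordering invariance of Proposition \ref{Reordering}. Thus the entire content of the statement is the convergence of the double series, and everything reduces to establishing that.

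First I would record the convergence criterion in the form adapted to $\NN^2$. By the definition of a double series together with the basic non-Archimedean fact that a series converges if and only if its general term tends to $0$, the series $\sum_{i,j} a_{ij}$ converges precisely when, for every $\e > 0$, the set $\{(i,j) : |a_{ij}| \ge \e\}$ is finite; indeed, after choosing a bijection $f : \NN \to \NN^2$ this is exactly the condition $a_{\phi(k),\psi(k)} \to 0$. I would also note that the hypothesis already forces each inner series $\sum_j |a_{ij}|$ to converge, since otherwise the outer series would not even be defined; so I may set $R_i = \sum_j |a_{ij}|$ and use that $\sum_i R_i$ is convergent.

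The key step is to verify the finiteness criterion. Fix $\e > 0$. Since $\sum_i R_i$ converges, $R_i \to 0$, so there is an integer $N$ with $R_i < \e$ for all $i > N$; for such $i$ one has $|a_{ij}| \le R_i < \e$ for every $j$, so no pair with $i > N$ can satisfy $|a_{ij}| \ge \e$. There remain only the finitely many rows $i \le N$, and in each of these the convergence of $\sum_j |a_{ij}|$ forces $|a_{ij}| \to 0$ as $j \to \infty$, whence only finitely many $j$ satisfy $|a_{ij}| \ge \e$. Summing finitely many finite contributions, the set $\{(i,j) : |a_{ij}| \ge \e\}$ is finite, and the double series converges.

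I expect this middle step to be the only genuine obstacle: in the Archimedean setting one would deduce convergence from the boundedness of increasing partial sums, but that route is closed here, exactly as the authors stress. The feature that makes the non-Archimedean argument go through is that the tail control $R_i \to 0$ disposes of all but finitely many rows uniformly in $j$, after which ordinary convergence within each of the finitely many remaining rows finishes the count. With convergence of $\sum_{i,j} a_{ij}$ in hand, the asserted equalities follow from Theorem \ref{Fubini} as described in the first paragraph.
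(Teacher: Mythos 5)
Your proof is correct, but its central step takes a genuinely different route from the paper's. Both arguments ultimately obtain all of the stated equalities by feeding the convergence of $\sum_{i,j}a_{ij}$ into Theorem \ref{Fubini} (the paper does exactly this in its general case, and your transposition trick for the column sums is the same reduction the paper's remark after Theorem \ref{Fubini} relies on), so the real content in either case is the convergence of the double series. The paper proves that convergence by an order-theoretic argument modelled on the classical one: first for $a_{ij}\ge 0$ it bounds every partial sum of the double series by $A=\sum_i(\sum_j a_{ij})$, then shows that for every $\e>0$ suitable rectangular partial sums exceed $A-(n+2)\e>A-\omega\e$, where $\omega$ is an infinite element absorbing the dependence of $n$ on $\e$; since the partial sums are increasing and bounded by $A$, they converge to $A$. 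The case of arbitrary signs is then reduced to this via absolute values. You instead exploit the genuinely non-Archimedean criterion that a series converges if and only if its general term tends to $0$, recast for double series as: for each $\e>0$ only finitely many pairs satisfy $|a_{ij}|\ge\e$. You verify this by using $R_i=\sum_j|a_{ij}|\to 0$ (terms of the convergent outer series) to discard all rows $i>N$ at once, via $|a_{ij}|\le R_i$, and ordinary term decay in the finitely many remaining rows. Your route needs no sign split, no infinite element $\omega$, and no identification of the limit during the convergence step, since the value comes for free from Theorem \ref{Fubini}; the paper's route stays closer to the Archimedean template and, in the positive case, produces the sum $A$ directly rather than through the direct Fubini theorem. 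One small point worth stating explicitly in your write-up: each term of a convergent series of non-negative terms is bounded by its sum (this justifies $|a_{ij}|\le R_i$), which follows at once from the monotonicity of its partial sums.
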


\begin{proof} As usual we start with the supplementary condition $a_{ij}\ge 0, \forall i,j$. Let $R$ be a partial sum of the series $\sum_{i,j} a_{ij}$ (the terms being arranged in some order). Assume that, in $R, i \le n, j \le m$. Then $R \le \sum _{i=0}^n(\sum_{j=0}^m a_{ij}) \le \sum _i(\sum_j a_{ij})$. In the Archimedean case, this would obviously imply the expected convergence of the sequence of the partial sums, and so of the double series. 
In our case we set: $b_i = \sum_j a_{ij}, A = \sum_i b_i$ (they are converging series by hypothesis). 
If we choose any $\e > 0$, there are $n$ such that $\sum_{i = 0}^n b_i > A - \e$ and integers $m_0,m_1,\cdot \cdot \cdot,m_n$ such that $\sum_{j=0}^{m_i} a_{ij} > b_i-\e$. If $\omega \in \KK$ is any element larger than any integer, we obtain: $\sum_{i=0}^n(\sum_{j=0}^{m_i} a_{ij}) > \sum_{i=0}^n(b_i-\e) = \sum_{i=0}^n b_i-(n+1)\e > A-\e-(n+1)\e = A-(n+2)\e > A-\omega \e$. Therefore there are partial sums of the double series less than $A$ but as close as we want to $A$, which means that the increasing sequence of the partial sums tends to $A$.

The general case ($a_{ij}$ not necessarily positive) follows easily. In fact $\sum_i(\sum_j|a_{ij}|)$ is convergent by hypothesis, hence the double series of the absolute values is also convergent, as just proved, and so also $\sum_{i,j}a_{ij}$.  Now the required equality follows from   Theorem \ref{Fubini}.
\end{proof}

\begin{remark} Observe that we require the convergence of the series $\sum_i(\sum_j |a_{ij}|)$, and our proof does not work under the weaker condition of convergence of both $\sum_j|a_{ij}|$ and $\sum_i|\sum_j a_{ij}|$, or equivalently of both $\sum_ja_{ij}$ and $\sum_i\sum_j a_{ij}$. This is shown by the following example.

\end{remark}

\begin{example} Let $\sum_i k_i$ be a series converging to $1$ and such that $k_i > 0, \forall i$.

Now set: $a_{i0} = 1-k_0, a_{ij} = -k_j$ if $j \ne 0$. Therefore $a_{ij}$ does not depend upon $i$. With the notation above, we have: $b_i = a_{i0}+a_{i1}+\cdot \cdot \cdot = 0$, so that $\sum b_i = 0$. However the series $\sum_i a_{i0} = a_{00}+a_{10}+\cdot \cdot \cdot $ is not a converging series. 
This is not in contradiction with the preceding theorem.
In fact observe that $|a_{i0}|+|a_{i1}|+ \cdot \cdot \cdot $ converges to $2(1-k_0)$ $(\forall i)$, and  $\sum_i 2(1-k_0)$ does not converge to any value.
\end{example}

As a corollary of \ref{Fubini converse} we have an application to the product of series.

\begin{corollary} {\label {product}} Assume that $\sum b_i$ converges to $B$ and $\sum c_i$ converges to $C$. Then the double series $\sum_{i,j} b_i c_j$ converges to $BC$.
\end{corollary}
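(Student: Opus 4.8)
The plan is to recognize $\sum_{i,j} b_i c_j$ as a double series with general term $a_{ij} = b_i c_j$ and to apply the converse Fubini-type Theorem \ref{Fubini converse}. That theorem requires two hypotheses: that $\sum_j a_{ij}$ converges for every $i$, and that $\sum_i\bigl(\sum_j |a_{ij}|\bigr)$ converges. Once both are verified, the theorem simultaneously delivers the convergence of $\sum_{i,j} b_i c_j$ and its evaluation through the iterated sum, from which the value $BC$ will fall out immediately.

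First I would dispose of the inner series. For a fixed $i$, the $m$-th partial sum of $\sum_j b_i c_j$ is $b_i \sum_{j=0}^m c_j$, and since multiplication by the fixed scalar $b_i$ is continuous (the absolute value being multiplicative), these tend to $b_i C$ as $m \to \infty$. Thus $\sum_j a_{ij}$ converges to $b_i C$ for every $i$, which settles the first hypothesis and already identifies $b_i := \sum_j a_{ij} = b_i C$ in the notation of Theorem \ref{Fubini converse}.

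Next I would verify $\sum_i\bigl(\sum_j |a_{ij}|\bigr)$, and here the key leverage is the non-Archimedean fact recalled in the Preliminaries that a series converges if and only if it converges absolutely. Consequently $\sum_j |c_j|$ converges, say to $\gamma$, and $\sum_i |b_i|$ converges, say to $\beta$. Then $\sum_j |a_{ij}| = |b_i|\,\gamma$, and scaling again by a constant gives $\sum_i\bigl(\sum_j |a_{ij}|\bigr) = \gamma \sum_i |b_i| = \gamma\beta$, which converges. This is precisely the step where the non-Archimedean equivalence of convergence and absolute convergence makes the otherwise nontrivial hypothesis automatic, rather than an extra assumption that would have to be imposed as over $\RR$.

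With both hypotheses in hand, Theorem \ref{Fubini converse} yields that $\sum_{i,j} b_i c_j$ converges and equals the iterated sum $\sum_i\bigl(\sum_j b_i c_j\bigr) = \sum_i b_i C = C\sum_i b_i = BC$, as claimed. The computations are entirely routine; the only point demanding care is the repeated use of the principle that scaling a convergent series by a fixed element preserves convergence and scales the limit, which rests on multiplicativity of the valuation, together with the systematic appeal to convergence = absolute convergence to secure the second hypothesis.
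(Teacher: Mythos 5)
Your proof is correct and follows essentially the same route as the paper: both set $a_{ij} = b_i c_j$, verify the hypotheses of Theorem \ref{Fubini converse} (inner series converging to $b_i C$, iterated absolute series converging to $\bigl(\sum_i |b_i|\bigr)\bigl(\sum_j |c_j|\bigr)$), and read off the value $BC$ from the iterated sum. Your write-up is in fact slightly more explicit than the paper's, since you spell out the appeal to the non-Archimedean equivalence of convergence and absolute convergence where the paper merely says ``by hypothesis.''
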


\begin{proof} Let us set: $a_{ij} = b_ic_j$. We want to apply Theorem \ref{Fubini converse} to the double series $\sum_{i,j} a_{ij}$. So we consider $\sum_i(\sum_j |a_{ij}|) = \sum_i(\sum_j|b_i||c_j|)$, where the internal series converge $\forall i$, by hypothesis, to $|b_i| \sum_j|c_j|$, while the external series $\sum_i(|b_i| \sum_j|c_j|) $ converges, by hypothesis, to $(\sum_j |c_j|)(\sum_i|b_i|)$. We conclude that $\sum_{i,j} (b_ic_j) = \sum_i(\sum_j(b_ic_j) = (\sum_ib_i)(\sum_j c_j)$.
\end{proof}

The above corollary on product of series has been used, with an ad-hoc direct proof working only in the  case under investigation, in \cite {CMV1}.

\begin {remark}

Observe that, on the field of real numbers, the property of Corollary \ref{product} can be proved under the supplementary assumption that at least one of the two series is absolutely convergent.

\end  {remark}

\section{A generalisation of the Fubini-type theorem}
Consider the converging double series $\sum_{i,j} a_{ij}$ , where  $(i,j) \in \NN^2$. Let $J$ be a subset of $\NN^2$, finite or countable. The sum restricted to $J$ is defined by introducing

$b_{ij}=a_{ij}$  if$ (i,j) \in J$,    $b_{ij}=0$  if$ (i,j) \notin J$

andsetting by definition:
$$\sum_J a_{ij}=\sum_{i,j} b_{ij}$$
(the convergence is ensured by the fact that $ |b_{ij}| \le |a_{ij}|)$.

\begin{theorem} {\label {Fubini gen}} Assume that  $\NN^2$ is partitioned into a finite or countable number of disjoints subsets $J_0,J_1,J_2....$  , each finite or countable. 
%according to
%$$I=J_0  \cup J_1\cup J_2\cup J_3\cup ....$$
Then $$\sum_{i,j} a_{ij}=\sum_{r = 0}^{\infty} (\sum_{(h,k) \in J_r} a_{hk})$$
\end{theorem}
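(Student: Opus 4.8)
The plan is to mirror the structure of the proof of Theorem \ref{Fubini}: first reduce to the case of non-negative terms via Lemma \ref{Splitting}, then handle that case by a squeeze argument that avoids any least-upper-bound property, and finally recover the general statement by splitting into positive and negative parts. Throughout I write $A = \sum_{i,j} a_{ij}$ and, for each block, $S_r = \sum_{(h,k)\in J_r} a_{hk}$, which converges by the comparison Remark \ref{Comparison} exactly as in the definition of a restricted sum.

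I would first treat the case $a_{hk} \ge 0$ for all $(h,k)$. The key elementary fact to establish is additivity of restricted sums over disjoint unions: if $J = J' \cup J''$ with $J' \cap J'' = \emptyset$, then the arrays $b, b', b''$ obtained by truncating $(a_{ij})$ to $J, J', J''$ respectively satisfy $b_{ij} = b'_{ij} + b''_{ij}$ for every $(i,j)$, so $\sum_J a = \sum_{J'} a + \sum_{J''} a$ by term-by-term addition of convergent series. By induction this gives $\sum_{r=0}^n S_r = \sum_{J_0 \cup \cdots \cup J_n} a_{hk}$; and since the truncation of $(a_{ij})$ to $J_0 \cup \cdots \cup J_n$ does not exceed $(a_{ij})$ termwise, monotonicity of sums of non-negative convergent series yields the upper bound $T_n := \sum_{r=0}^n S_r \le A$ for every $n$.

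The heart of the argument is the matching lower bound, and this is where the non-Archimedean hypothesis bites: $T_n$ is increasing and bounded by $A$, but I cannot invoke a supremum. Instead, given any $\e > 0$, convergence of the double series to $A$ produces a finite partial sum exceeding $A - \e$, that is, a finite set $F \subset \NN^2$ with $\sum_{(i,j)\in F} a_{ij} > A - \e$. Because $F$ is finite and the $J_r$ partition $\NN^2$, $F$ lies inside $J_0 \cup \cdots \cup J_n$ for $n$ large enough, whence $T_n \ge \sum_{(i,j)\in F} a_{ij} > A - \e$. Thus $A - T_n$ is a non-negative decreasing sequence that becomes smaller than every given $\e > 0$, so $A - T_n \to 0$; this forces $T_n \to A$ and in particular $S_n = T_n - T_{n-1} \to 0$, so $\sum_r S_r$ converges to $A$. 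The main obstacle is precisely this passage from \emph{bounded and arbitrarily close} to \emph{convergent}, which I resolve by the observation that a decreasing non-negative sequence which drops below every positive element of $\KK$ tends to $0$ — a statement valid in any ordered field and independent of completeness.

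Finally, for arbitrary $a_{hk}$ I would apply the non-negative case separately to $a^+_{hk}$ and $a^-_{hk}$ as in Lemma \ref{Splitting}, both of whose double series converge since $0 \le a^\pm_{hk} \le |a_{hk}|$ and convergence is equivalent to absolute convergence over $\KK$. Subtracting the two resulting identities and using term-by-term subtraction of convergent series, both inside each block and in the outer series over $r$, gives $\sum_r \big(\sum_{(h,k)\in J_r}(a^+_{hk}-a^-_{hk})\big) = \sum_{i,j} a_{ij}$, which is the claim. When the partition has only finitely many blocks the outer series is a finite sum and the statement follows from the additivity step alone, so that situation needs no separate treatment.
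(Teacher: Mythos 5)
Your proof is correct, but it takes a genuinely different route from the paper's. The paper disposes of the theorem by a short reduction: choose an ordering inside each block $J_r$, let $c_{rj}$ be the $j$-th element of $J_r$ in that ordering, observe that the array $(c_{rj})$ is a reordering of the array $(a_{ij})$ --- so $\sum_{i,j}a_{ij}=\sum_{r,j}c_{rj}$, since a double series is by definition a simple series indexed through a bijection $\NN\to\NN^2$ and reorderings preserve sums by Proposition \ref{Reordering} --- and then applies Theorem \ref{Fubini} to $(c_{rj})$: summing its rows is exactly summing over the blocks. Your argument never invokes Theorem \ref{Fubini}; instead it re-proves its content in the more general block setting: additivity of restricted sums, the upper bound $T_n\le A$, and the $\e$-squeeze in which a finite partial sum exceeding $A-\e$ is captured inside finitely many blocks. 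This is structurally the paper's proof of Theorem \ref{Fubini} with rows replaced by blocks (indeed that theorem is your special case $J_r=\{r\}\times\NN$). What the paper's reduction buys is brevity, at the price of bookkeeping it only gestures at: when some blocks are finite (or the number of blocks is finite) the array $(c_{rj})$ is not defined on all of $\NN^2$ and needs padding by zeros or a separate remark, which the paper handles parenthetically. What your direct proof buys is uniformity --- restricted sums treat finite and countable blocks, and finitely or countably many of them, without case distinctions --- and it isolates explicitly the one genuinely non-Archimedean point, namely that a decreasing non-negative sequence which drops below every $\e>0$ tends to $0$ with no appeal to suprema. Both proofs rest on the same foundations (Lemma \ref{Splitting}, Proposition \ref{Reordering}, and comparison), so yours is a valid, self-contained alternative.
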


\begin{proof} {\label {Fubini gen}}
First of all we choose an order inside each of the subsets $J_i$ (here we consider the case  of $J_i$ countable, in the case of $J_i$ finite the proof is simpler). 
Then define $c_{ij}$  to be the $j^{th}$ element of $J_i$  in the chosen ordering.
It is easy to see that the matrix $c_{ij}$ is a reordering of the matrix $a_{ij}$. Therefore  $\sum_{i,j} a_{ij}=\sum_{i,j} c_{ij}$ and an application of theorem \ref{Fubini} provides the result.
\end{proof}

\section{Power series}

\subsection{Product of power series}

The results of section 4 can be applied to the product of two power series, obtaining the following 

\begin{corollary}\label {power product} If the two power series $\sum_ia_iX^i,  \sum_jb_jX^j$ converge at the same $x \in \KK$, then, $(\sum_ia_ix^i)( \sum_jb_jx^j) = \sum_k(\sum_{i=0}^ka_ib_{k-i})x^k$, (where the latter series is convergent at $x$).
 
\end{corollary}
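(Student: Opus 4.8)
The plan is to derive this from the product-of-series corollary (Corollary \ref{product}) and the generalised Fubini-type theorem (Theorem \ref{Fubini gen}), grouping the terms of the resulting double series along anti-diagonals. First I would put $p_i = a_i x^i$ and $q_j = b_j x^j$. By hypothesis the single series $\sum_i p_i$ and $\sum_j q_j$ converge, say to $P$ and $Q$, so Corollary \ref{product} applies and tells us that the double series $\sum_{i,j} p_i q_j = \sum_{i,j} a_i b_j\, x^{i+j}$ converges to $PQ = (\sum_i a_i x^i)(\sum_j b_j x^j)$.

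Next I would partition $\NN^2$ along its anti-diagonals: for each $k \in \NN$ set $J_k = \{(i,j) \in \NN^2 : i+j = k\}$. These subsets are pairwise disjoint, their union is all of $\NN^2$, and each $J_k$ is finite, containing exactly the $k+1$ pairs $(0,k),(1,k-1),\dots,(k,0)$; hence $\{J_k\}_{k\in\NN}$ is an admissible partition in the sense of Theorem \ref{Fubini gen}. Applying that theorem to the convergent double series $\sum_{i,j} a_i b_j\, x^{i+j}$ yields
$$\sum_{i,j} a_i b_j\, x^{i+j} = \sum_{k=0}^{\infty}\Bigl(\sum_{(i,j)\in J_k} a_i b_j\, x^{i+j}\Bigr),$$
and, since each $J_k$ is finite, the convergence of this single outer series is part of the conclusion of Theorem \ref{Fubini gen}, which is exactly the convergence claimed for the Cauchy product.

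Finally I would evaluate the inner sums: every $(i,j) \in J_k$ satisfies $j = k-i$ with $0 \le i \le k$ and $x^{i+j} = x^k$, so $\sum_{(i,j)\in J_k} a_i b_j\, x^{i+j} = \bigl(\sum_{i=0}^{k} a_i b_{k-i}\bigr) x^k$. Substituting into the previous display and using $PQ = (\sum_i a_i x^i)(\sum_j b_j x^j)$ gives the stated equality. I do not anticipate a real obstacle: the only substantive fact is the convergence of the double series, furnished by Corollary \ref{product} with no recourse to any least-upper-bound property, and the single point to verify carefully is that the anti-diagonal family satisfies the disjointness and finiteness hypotheses of Theorem \ref{Fubini gen}, which is immediate.
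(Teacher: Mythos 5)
Your proposal is correct and follows essentially the same route as the paper: apply Corollary \ref{product} to get convergence of the double series $\sum_{i,j}a_ib_jx^ix^j$ to the product of the sums, then regroup along the anti-diagonals $i+j=k$ to obtain the Cauchy product form. The only difference is that you explicitly invoke Theorem \ref{Fubini gen} with the finite partition $J_k=\{(i,j):i+j=k\}$, whereas the paper states the same regrouping informally as a permitted reordering by increasing values of $i+j$.
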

\begin{proof}
By Corollary \ref{product}, the double series $\sum_{ij}a_ib_jx^ix^j$ is convergent to the value $(\sum_ia_ix^i)( \sum_jb_jx^j)$. Moreover, since it is allowed to reorder the terms of a double series by increasing values of $i+j$, we obtain the following formula: $\sum_{i,j}a_ib_jx^ix^j = \sum_k(\sum_{i+j = k}a_ib_j)x^k$.

\end{proof}

As for the $i^{th}$ power of a power series, we have the following 

\begin{corollary} \label {power power}Let $S(X) = \sum_ja_jX^j$ be a power series converging at $x$. Then $(\sum_ja_jx^j)^i = \sum_j c_{ij} x^j$, where the $c_{ij}$'s are given by the recursive formulae:  

$c_{00} = 1, c_{0j} = 0 \ \forall j \ne 0 $

$ c_{i+1 \ j} = c_{i0}a_j+c_{i1}a_{j-1}+\cdot \cdot \cdot+c_{ij}a_0 $

\end{corollary}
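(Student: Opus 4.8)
The plan is to prove the identity by induction on the exponent $i$, feeding the product formula of Corollary \ref{power product} into the inductive step. Throughout, write $s = \sum_j a_j x^j$, which by hypothesis is a well-defined element of $\KK$.

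For the base case $i = 0$, I would simply compare the two sides: $s^0 = 1$, while $\sum_j c_{0j} x^j = c_{00} + \sum_{j \ne 0} c_{0j} x^j = 1$ by the initial conditions $c_{00} = 1$ and $c_{0j} = 0$ for $j \ne 0$. As a sanity check on the recursion, the case $i = 1$ gives $c_{1j} = c_{00} a_j = a_j$, so that $\sum_j c_{1j} x^j = \sum_j a_j x^j = s$, exactly as it should.

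For the inductive step, suppose that $s^i = \sum_j c_{ij} x^j$, where the right-hand series converges at $x$; note that this convergence is automatic, since by the inductive hypothesis the series equals the element $s^i \in \KK$. Writing $s^{i+1} = s^i \cdot s$ and substituting, I would obtain the product of the two power series $\sum_j c_{ij} x^j$ and $\sum_j a_j x^j$, both convergent at $x$. Applying Corollary \ref{power product} then yields $s^{i+1} = \sum_k \left( \sum_{l=0}^k c_{il} a_{k-l} \right) x^k$, and the coefficient $\sum_{l=0}^k c_{il} a_{k-l} = c_{i0} a_k + c_{i1} a_{k-1} + \cdots + c_{ik} a_0$ is precisely $c_{i+1,k}$ by the defining recursion. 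This closes the induction and, at the same time, shows that $\sum_j c_{i+1,j} x^j$ converges at $x$.

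The only point requiring care is the verification, at each step, of the hypothesis of Corollary \ref{power product} that both factors converge at $x$. This is supplied for free: $\sum_j a_j x^j$ converges by assumption, and $\sum_j c_{ij} x^j$ converges because the inductive hypothesis identifies it with $s^i$. I therefore expect no genuine obstacle here; the whole argument is a bookkeeping of the Cauchy product, with the convergence that would be the delicate issue in the Archimedean setting handled automatically by the non-Archimedean criterion already recorded in the Preliminaries and propagated through the induction.
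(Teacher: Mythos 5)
Your proof is correct and is essentially the paper's own argument: the paper's proof is the one-line "apply $i$ times Corollary \ref{power product}," and your induction on $i$, with the Cauchy product supplying the recursion $c_{i+1,k} = \sum_{l=0}^{k} c_{il} a_{k-l}$ and convergence propagated through each step, is exactly the careful spelling-out of that iteration. No discrepancy in method or substance.
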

\begin{proof} It is enough to apply $i$ times Corollary \ref{power product}.
\end{proof}

\subsection{Substitution into a power series}

Let  $S(X) = \sum_j a_jX^j, T(Y) = \sum b_iY^i$ be two power series. If we replace $Y$ by $S(X)$ into the series $T(Y)$,  we obtain a new object $T(S(X))$ that can be transformed into a power series (with variable $X$) if we perform all the usual operations on the coefficients as if it were allowed to gather all coefficients of $X^n, \forall n = 0, 1, \cdot \cdot \cdot$, disregrading the (minor) problem that $S(X), T(Y)$ are power series and not  polynomials. More precisely, we consider $T(S(X))$ as a new element of $\KK[[X]]$, say $T(S(X)) = \sum _nd_nX^n$, where 

$d_j=\sum_ib_ic_{ij} \ (*)$. 

Observe that we are dealing with series whose convergence has to be checked and that the infinite matrix $c_{ij}$ is given by the recursive formulae of Corollary \ref{power power}.

Such a procedure, which is quite natural, makes sense if the series $d_n, \forall n$ attain a value in the complete field $\KK$, i.e. if they are converging series. In this event the replacement gives rise to a new power series, which we can call $T(S(X))$.

In order to investigate such a replacement, mainly  focusing our attention on convergence, we call the $d_n$'s satisfying (*)  {{$\mathbf{expected \ coefficients}$}. The following theorem throws light into the replacement.

\begin{theorem} \label{substitution} 
Let $S(X) = \sum_ja_jX^j, T(X) = \sum_ib_iX^i$ be two power series such that:

(i) $S(X)$ has a sum $S(x) = k$ at some $x \in \KK$, and consequently also $ \sum_j|a_j||x^j|$ has a sum $\bar k$;

(ii) $T(X)$ converges to $T(\bar k)$ at $\bar k = \sum_j |a_j||x^j|$.

Then 

(i) $d_j, \forall j,$ is a convergent series

(ii) $T(k)=  \sum_jd_jx^j$, where the  $d_j$'s are  the {$\mathbf{expected \ coefficients}$ and are series converging in $\KK$}.

\end{theorem}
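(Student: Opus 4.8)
The plan is to reduce the statement to the converse Fubini-type Theorem \ref{Fubini converse} applied to the double series whose general term is $a_{ij} = b_i c_{ij} x^j$. The motivation is purely formal: by Corollary \ref{power power} the $i$-th power of $S$ at $x$ is $(S(x))^i = \sum_j c_{ij}x^j = k^i$, so that, reading $T(k) = \sum_i b_i k^i$ as a sum over the rows of the matrix $(a_{ij})$, one has $T(k) = \sum_i \bigl(\sum_j a_{ij}\bigr)$, while summing over the columns produces $\sum_j \bigl(\sum_i b_i c_{ij}\bigr) x^j = \sum_j d_j x^j$. Thus both assertions (i) and (ii) will follow at once if the hypotheses of Theorem \ref{Fubini converse} can be verified for $(a_{ij})$, since that theorem guarantees the convergence of the double series, of each column sum $\sum_i a_{ij}$ (whence the convergence of $d_j$, after dividing by $x^j$ when $x \neq 0$; the case $x=0$ is trivial), and the equality of the two iterated sums.

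The hard part, and the only place where the non-Archimedean hypotheses enter decisively, is the verification of the second hypothesis of Theorem \ref{Fubini converse}, namely the convergence of $\sum_i\bigl(\sum_j |a_{ij}|\bigr)$. Here I would introduce the dominating series $\bar S(X) = \sum_j |a_j| X^j$, with non-negative coefficients, and let $\bar c_{ij}$ be its power coefficients, i.e. those produced by the recursion of Corollary \ref{power power} with each $a_j$ replaced by $|a_j|$. A straightforward induction on $i$, using the recursive formula $c_{i+1\,j} = \sum_{l=0}^{j} c_{il}\,a_{j-l}$ together with the ultrametric inequality, yields the key comparison $|c_{ij}| \le \bar c_{ij}$ for all $i,j$. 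On the other hand, hypothesis (i) says that $\bar S$ converges at $|x|$ to $\bar k = \sum_j |a_j||x^j|$, so Corollary \ref{power power} applied to $\bar S$ gives $\sum_j \bar c_{ij}|x^j| = \bar k^{\,i}$. Combining the two facts, $\sum_j |a_{ij}| = |b_i|\sum_j |c_{ij}||x^j| \le |b_i|\,\bar k^{\,i}$ for every $i$.

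It then remains to check that the two hypotheses of Theorem \ref{Fubini converse} genuinely hold. For the first, each row sum $\sum_j a_{ij} = b_i\sum_j c_{ij}x^j = b_i k^i$ converges by Corollary \ref{power power}, since $S$ converges at $x$. For the second, hypothesis (ii) states that $T$ converges at $\bar k$; since over $\KK$ convergence is equivalent to absolute convergence, $\sum_i |b_i|\,\bar k^{\,i}$ converges, hence its terms tend to $0$, and by the estimate above the non-negative numbers $\sum_j |a_{ij}|$ also tend to $0$, so $\sum_i\bigl(\sum_j|a_{ij}|\bigr)$ converges, since over $\KK$ a series whose terms tend to $0$ converges (cf. Remark \ref{Comparison}).

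Finally I would read off the conclusions. Theorem \ref{Fubini converse} now gives the convergence of $\sum_{ij} a_{ij}$ together with the equalities $\sum_{ij} a_{ij} = \sum_i(\sum_j a_{ij}) = \sum_j(\sum_i a_{ij})$. The row expansion equals $\sum_i b_i k^i$, which is exactly $T(k)$; indeed $T$ converges at $k$, because the ultrametric inequality gives $|k| \le \bar k$ and hence forces $\sum_i b_i k^i$ to converge. The column expansion equals $\sum_j d_j x^j$ with $d_j = \sum_i b_i c_{ij}$, and the convergence of each column sum $\sum_i a_{ij}$ yields the convergence of the expected coefficients $d_j$, proving (i); the identity $T(k) = \sum_j d_j x^j$ is then (ii). The single delicate point throughout is the coefficient comparison $|c_{ij}|\le\bar c_{ij}$ and the resulting bound $\sum_j|c_{ij}||x^j|\le\bar k^{\,i}$, which is what links the formal coefficients $c_{ij}$ to the convergence hypothesis (ii) on $T$.
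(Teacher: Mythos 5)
Your proposal is correct in substance and rests on the same pillars as the paper's proof: Corollary \ref{power power}, the dominating series $\bar S(X)=\sum_j|a_j|X^j$ with coefficients $\bar c_{ij}$, the inductive comparison $|c_{ij}|\le\bar c_{ij}$, and the Fubini-type theorems. The difference is organizational, and your version is leaner. The paper argues in three cases --- Case A ($a_j\ge 0$, $x>0$), which is then invoked as a lemma inside Case B (arbitrary signs, $x>0$), which is in turn invoked in Case C ($x<0$) --- and in Case B it applies Theorem \ref{Fubini converse} to the absolute-value series $\sum_{i,j}|b_i||c_{ij}|x^j$ and then needs a further application of Theorem \ref{Fubini} to the signed series. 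You instead verify the two hypotheses of Theorem \ref{Fubini converse} once, directly for $a_{ij}=b_ic_{ij}x^j$ with arbitrary signs of $a_j$ and $x$, via the single bound $\sum_j|a_{ij}|\le|b_i|\,\bar k^{\,i}$ and the principle that a series over $\KK$ converges as soon as its terms tend to $0$. This removes the case analysis and uses the Fubini machinery only once; what the paper's case-by-case layout buys is mainly expository, namely that the positive case, where the hypotheses of Theorem \ref{Fubini converse} are essentially immediate, is displayed first, in the same spirit as Lemma \ref{Splitting}.

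Two caveats. First, the ``ultrametric inequality'' you invoke twice is not available in this setting: $\KK$ here is a non-Archimedean \emph{ordered} field (e.g.\ $\widehat{\QQ(\e)}$), in which $|1+1|=2>\max(|1|,|1|)$, so $|a+b|\le\max(|a|,|b|)$ fails. The inequalities you actually need, $|c_{i+1\,j}|\le\sum_{l\le j}|c_{il}||a_{j-l}|$ and $|k|\le\bar k$, follow from the ordinary triangle inequality, which holds and suffices; so this is a misnomer, not a gap. Second, ``the case $x=0$ is trivial'' overstates things for conclusion (i): at $x=0$ your double series controls only $d_0$, while convergence of $d_j=\sum_i b_ic_{ij}$ for $j\ge 1$ requires a separate argument (for $a_0\ne 0$, bound $\bar c_{ij}$ by a polynomial in $i$ times $|a_0|^i$ and use that $t_i\to 0$ implies $i\,t_i\to 0$ over a non-Archimedean ordered field; for $a_0=0$ each $d_j$ is a finite sum). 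To be fair, the paper's own proof has the same lacuna, since its three cases cover only $x>0$ and $x<0$.
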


\begin{proof} 
Case  A.

Assume that $a_i \ge 0, \forall i, x > 0$ and put: $k = S(x)$. 
Then $T(k) = T(S(x)) = \sum_ib_i(\sum_j a_jx^j)^i$. 
By applying Corollary \ref{power power},  the power appearing in the last formula can be expressed as a convergent power series:
$$(\sum_j a_jx^j)^i=\sum_j c_{ij} x^j$$
where the  (obviously non-negative) coefficients $c_{ij}$ are given by Corollary \ref{power power}.

Therefore we have: $T(k) = \sum_ib_i(\sum_jc_{ij}x^j)$ and can apply \ref{Fubini converse} to the double series $\sum_{ij}b_i c_{ij}x^j$. In fact the two conditions of \ref{Fubini converse} are fulfilled:

(i) $\sum_j|b_ic_{ij}x^j| = |b_i| (\sum_j c_{ij}x^j)$ converges to $|b_i|(\sum_ja_jx^j)^i$;

(ii) $\sum_i|b_i|(\sum_ja_jx^j)^i)$ is convergent by hypothesis.

Therefore we obtain the expected equality: $$T(k) = \sum_ib_i(\sum_ja_{ij}x^j) = \sum_j(\sum_ib_ic_{ij})x^j=\sum_j d_j x^j$$
where $d_j$ is given by the convergent series $d_j=\sum_ib_ic_{ij}$

Case B. 

Assume that $x > 0$ as for Case A, but don't put any condition on the sign of the $a_j's$. 

As before, put $k=S(x)$    and $\bar k=\bar S(x)=\sum_j |a_j|x^j$.
Obviously $\bar k \ge\ |k|$.
By hypothesis $T(\bar k)$ is convergent, therefore the results of Case A can be applied to expand $T(\bar k)$ in power series of $x$, since all the coefficients are non negative in the series $\bar S(x)$. 
Starting with the coefficients $|a_j|$ instead of $a_j$, the quantities $c_{ij}$ are replaced by new quantities $\bar c_{ij}$ , and an observation of the recurrence relations of Corollary \ref{power power} says that $|c_{ij}| \le \bar c_{ij}$.

By the same calculation of Case A we have that $\sum |b_i|(\sum_j \ |a_{j}|x^j)^i=\sum |b_i|(\sum_j \bar c_{ij}x^j)$ is convergent.
Therefore  in the repeated series $\sum_i |b_i|(\sum_j  |c_{ij}|x^j)$ all the terms are non negative and not greater then the corresponding terms of a converging series. Then  $\sum_i |b_i|(\sum_j  |c_{ij}|x^j)$ is convergent. Then also the double series  $\sum_{ij} |b_i|  |c_{ij}|x^j$ is convergent, by Theorem \ref {Fubini converse}.

In conclusion, this implies the convergence of $\sum_{ij} b_i  c_{ij}x^j$, and an application of Theorem \ref {Fubini} allows to obtain the result.

Case C

Finally, assume that $x <0$, and define $y=-x$.

We have $S(T(x))=S(T(-y))=\sum_i b_i (\sum_j (-1)^j a_j y^j)$. By hypothesis $S(X)$ is convergent at $\bar k=\sum_j |a_j| |x^j|=\sum_j |a_j|y^j=\sum_j |(-1)^j a_j|y^j$.
Therefore Case B can be applied, and the result follows from an  immediate calculation.

\end{proof}

\begin{remark} Observe that we require the convergence of the series $\sum_i b_i(\sum_j |a_{j}||x^j|)^i$, and the claim  does not hold true under the weaker condition of convergence of both $\sum_j |a_{j}||x^j|$ and $\sum_i b_i|(\sum_j a_{j}x^j)^i|$, as the following example shows.

\end{remark}

\begin {example} \label{non-substitution}
Let  $\varepsilon $ be a positive topologically nilpotent infinitesimal, i.e.  \[\mathop {\lim }\limits_{n \to \infty } {\varepsilon ^n} = 0\]  (we  suppose that it  exists, otherwise the question on convergence of series are trivial, since any power series is convergent everywhere or only at $0$, see \cite{CMV1}).

Consider the series  
\[S(X) = {a_0} + {a_1}X + {a_2}{X^2} + ....\]

where
\[{a_0} = \frac{1}{{1 - \varepsilon }} - 1 = \frac{\varepsilon }{{1 - \varepsilon }}\,\,\,\,\,\,\,\,{a_1} = {a_2} = {a_3} = ..... =  - 1\]    

It is immediate that $S(X)$ is convergent at $x=\varepsilon$, with sum $0$ .

Consider the series

\[T(X) = 1 + \frac{X}{\varepsilon } + {(\frac{X}{\varepsilon })^2} + ....\]

It is convergent, for instance, at $X=\varepsilon^2$. Moreover we have $T(S(\varepsilon )) = 1$.

Therefore one could expect that the formal substitution of $S(X)$  into $T(X)$   would produce a power series in $X$, convergent at $\varepsilon $, with sum $1$. 

This is not the case; in fact the formal substitution provides:
\[T(S(X)) = 1 + \frac{{{a_0} + {a_1}X + {a_2}{X^2} + .....}}{\varepsilon } + {(\frac{{{a_0} + {a_1}X + {a_2}{X^2} + .....}}{\varepsilon })^2} + ....\]

A reordering according to increasing powers of $X$ is impossible; in fact the coefficient of the power $0$ would be
\[1 + \frac{{{a_0}}}{\varepsilon } + {(\frac{{{a_0}}}{\varepsilon })^2} + .... = 1 + \frac{1}{{1 - \varepsilon }} + \frac{1}{{{{(1 - \varepsilon )}^2}}} + .....\]
and this series is not convergent, since $\frac{1}{{1 - \varepsilon }}$ is different from $1$ for an infinitesimal quantity, and consequently its $n-$th power does not tend to $0$. 

This is not in contracdition with the last theorem, because the criterion on the absolute convergence is not satisfied by the example. In fact, consider the series $\bar S(X)$ obtained from $S(X)$ by taking  the absolute values of the coefficients and calculate it at $x=\varepsilon$, obtaining:
 \[\overline S (x) = |{a_0}| + |{a_1}|x + |{a_2}|{x^2} + .... = \frac{\varepsilon }{{1 - \varepsilon }} + \varepsilon + {\varepsilon^2} + ....=2\frac{\varepsilon }{{1 - \varepsilon }}\]

Finally note that the series $T(X)$ is not convergent at $2\frac{\varepsilon }{{1 - \varepsilon }}$.
In fact 
\[ 1 + \frac{2}{{1 - \varepsilon }} + {(\frac{2}{{1 - \varepsilon }})^2} + .....\]   is not convergent.

\

\end {example}

The following result follows from Theorem \ref{substitution}.  

\begin {theorem}\label{substitution2}

If the convergence domains of both $S(X)$ and $T(X)$ are different from the set $\{0\}$, and if  $T(X)$ is convergent at $a_0$, then in a suitable neighbourhood of $0$, $T(S(x))$ is given by the series
$T(k)=  \sum_jd_jx^j$, where the  $d_j$'s are  the $\mathbf{expected \ coefficients}$ and are series converging in $\KK$.

\end {theorem}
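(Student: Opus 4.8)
The plan is to deduce Theorem \ref{substitution2} from Theorem \ref{substitution} by exhibiting a single neighbourhood $U$ of $0$ on which both hypotheses (i) and (ii) of that theorem hold simultaneously. Recall that, for a fixed $x$, hypothesis (i) asks that $S$ converge at $x$, so that $k=S(x)$ and $\bar S(x)=\sum_j|a_j||x|^j$ are defined, while hypothesis (ii) asks that $T$ converge at $\bar S(x)$. Since the convergence domain of $S$ is different from $\{0\}$, $S$ converges at some $x_0\ne 0$; by the ball shape of convergence domains (if $\sum_j|a_j||y|^j$ converges and $|x|\le|y|$, then $|a_j||x|^j\le|a_j||y|^j$, so $\sum_j|a_j||x|^j$ converges by Remark \ref{Comparison}), hypothesis (i) holds for every $x$ with $|x|\le|x_0|$. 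The whole difficulty lies in securing hypothesis (ii).

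First I would establish the quantitative estimate that $\bar S(x)\to|a_0|$ as $x\to 0$. Fix $x_0\ne 0$ in the domain of $S$ and take $|x|\le|x_0|$. For $j\ge 1$ one has $(|x|/|x_0|)^{j}\le|x|/|x_0|$ by monotonicity of multiplication by positive elements of the ordered field, hence
$$ \bar S(x)-|a_0|=\sum_{j\ge 1}|a_j||x|^j\le\frac{|x|}{|x_0|}\sum_{j\ge 1}|a_j||x_0|^j=\frac{|x|}{|x_0|}\bigl(\bar S(x_0)-|a_0|\bigr). $$
Writing $C=\bar S(x_0)-|a_0|$, this gives $0\le\bar S(x)-|a_0|\le C\,|x|/|x_0|$, so $\bar S(x)\to|a_0|$ while $\bar S(x)\ge|a_0|$ always. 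If $C=0$ then $a_j=0$ for all $j\ge 1$, $S$ is constant and the statement is trivial, so I may assume $C>0$.

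Next I would feed this into the convergence domain of $T$. Since that domain is again a ball containing $a_0$, it suffices to produce one point $y_1$ at which $T$ converges with $|y_1|>|a_0|$: then $T$ converges on all of $\{|y|\le|y_1|\}$, and choosing $U=\{x:\ |x|\le|x_0|,\ C|x|/|x_0|<|y_1|-|a_0|\}$ forces $\bar S(x)<|y_1|$, whence $T$ converges at $\bar S(x)$ for every $x\in U$. This is exactly the main obstacle: from "convergence of $T$ at $a_0$'' plus "domain of $T$ different from $\{0\}$'' I must deduce that $a_0$ lies in the \emph{interior} of the domain of $T$, i.e.\ that such a $y_1$ with $|y_1|>|a_0|$ exists. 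The case $a_0=0$ is immediate, since then $\bar S(x)\to 0$ eventually lands in any fixed ball of convergence of $T$; the real content is $a_0\ne 0$, and this interiority is precisely what fails in Example \ref{non-substitution}, where $T$ converges only strictly inside $|y|=|\varepsilon|$ while $|a_0|=|\varepsilon|$ sits on the boundary (so $T$ is not even convergent at $a_0$). The hypotheses are thus designed to push $a_0$ strictly inside, and I would justify the existence of $y_1$ from the description of convergence domains for the fields under consideration.

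Finally, with $U$ so chosen, both hypotheses of Theorem \ref{substitution} hold at every $x\in U$. The expected coefficients $d_j=\sum_i b_i c_{ij}$ involve the $c_{ij}$ of Corollary \ref{power power}, which depend only on the $a_j$ and not on $x$; hence applying Theorem \ref{substitution} at any fixed nonzero point of $U$ already yields that each $d_j$ is a convergent series in $\KK$, and applying it pointwise gives $T(S(x))=T(k)=\sum_j d_j x^j$ for every $x\in U$. Since the coefficients $d_j$ are independent of $x$, this is the single power series asserted by the statement, valid throughout the neighbourhood $U$ of $0$.
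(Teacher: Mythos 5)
Your overall strategy --- reduce to Theorem \ref{substitution}, show that $\bar S(x)$ tends to $|a_0|$ as $x\to 0$, and use the ball shape of convergence domains --- is exactly the paper's, and your estimate $\bar S(x)-|a_0|\le C|x|/|x_0|$ is a correct, more explicit substitute for the paper's appeal to continuity of $\bar S$ at $0$. But there is a genuine gap at the point you yourself call ``the main obstacle'': the existence of $y_1$ with $|y_1|>|a_0|$ at which $T$ converges. You never prove it; you only say you ``would justify'' it ``from the description of convergence domains for the fields under consideration.'' This is precisely the step that cannot be left as an IOU, because it is where the non-Archimedean hypothesis does all the work: over $\RR$ the corresponding claim is \emph{false}, and so is the theorem itself under your reading of the hypotheses. (Take $T(X)=\sum_{i\ge 1}X^i/i^2$, which converges at $a_0=1$ but at no point of larger absolute value, and $S(X)=1+X$; both domains are nontrivial and $T$ converges at $a_0$, yet the expected coefficient $d_1=\sum_{i\ge 1}\binom{i}{1}/i^2=\sum_{i\ge 1}1/i$ diverges.) So ``convergence at $a_0$ implies $a_0$ is interior to the domain of $T$'' is not a generic property of convergence domains; it requires a specifically non-Archimedean argument.

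The paper closes exactly this step by citing \cite{CMV1}: over $\KK$, convergence of $\sum_i b_iy^i$ at $y=|a_0|$ implies convergence at $2|a_0|$, so one may take $y_1=2|a_0|$. The reason is short and worth spelling out: since $\KK$ is non-Archimedean there is a single element $\omega$ with $\omega>2^i$ for every $i$, hence $|b_i|\,(2|a_0|)^i=2^i|b_i|\,|a_0|^i\le\omega\,|b_i|\,|a_0|^i$, and the right-hand side is a null sequence because $|b_i|\,|a_0|^i\to 0$ and multiplication by the fixed element $\omega$ preserves null sequences; convergence of the series then follows since, over $\KK$, a series converges if and only if its general term tends to $0$. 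With this two-line lemma inserted, your argument is complete and coincides with the paper's proof: your treatment of the case $a_0=0$ and the final pointwise application of Theorem \ref{substitution}, with the coefficients $d_j$ independent of $x$, both match what the paper does.
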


\begin {proof}

Case 1: $a_0 \ne 0$. We know that $\sum_i b_i a_0^i $ is convergent by hypothesis, and therefore also $\sum_i |b_i| |a_0|^i$. Moreover $\sum_j |a_j| |x|^j$ is convergent for some $x\ne0$, and therefore in a neighborough of $0$.  It is continuous and takes the value $|a_0|$ at $x=0$, then for $|x|<k$ (where $k$ is a suitable value), we obtain $\sum_j |a_j| |x|^j<2|a_0|$ .

Since the series $\sum_i b_i X^i $ is convergent at $|a_0|$, it is convergent also at  $2|a_0|$ (see \cite{CMV1}), then $\sum_i b_i (\sum_j |a_j| |x|^j  )^i$ is convergent. The conclusion follows from Theorem  \ref{substitution}.

Case 2: $a_o = 0$. Let $a \ne 0$ be any point where $T(X)$ is convergent. Since $S(0) = 0$ and $S(X)$ is a continuous function, in a suitable neighbourhood of $0$ it holds: $|S(x)| < |a|$. Then we argue as above.

\end {proof}

 \begin{remark} When we are dealing with the double series $T(S(X))$ we actually should consider two different objects: the powers series formally obtained by manipulation of the coefficients, in line with the above theorems, and the composite function $T\circ S: x \to T(S(x)), \forall x$ in some domain.

The above Example \ref{non-substitution} shows that the composite function $T \circ S$ may be defined somewhere, while the formal double series does not exist because its coefficients are diverging series.

The following example shows that the formal power series, existing in this case,  may have a  domain which is different from the set $\{0\}$ but is, nevertheless, strictly smaller than  the domain of the composite function. This would be impossible if the internal series $S(X)$ had only positive coefficients, as shown by the proof of Theorem \ref{substitution}.

\end{remark}

Before we discuss the following example, we recall the so called Fa\`a di Bruno formula for the $n-th$ derivative of a composite function $f \circ g$ (\cite {Mennucci}):

$$D^{(n)}(f \circ g)(x) \sum_{\pi \in P_n} f^{|\pi|}((g(x))\Pi_{B \in \pi}g^{|B|}(x),$$
where 

1. $\pi$ varies in the set $P_n$ of all partitions of the set $\{1,\cdot \cdot \cdot ,n\}$,

2. $B$ varies in $\pi$,

3. $|A|$ denotes the cardinalty of the finite set $A$. 

Such a formula, that can proved by induction on $n$ (see\cite{Mennucci} ), can be easily seen to hold in the non-Archimedean case.

So the $n-th$ derivative is a sum of products of derivatives containing one and only one element $D^{(n)} f(g(x))(g'(x)^n)$, while all other elements have the form $(D^{(m)}f)( D^{(h_1)}g)(D^{(h_2)}g) \cdot \cdot \cdot (D^{(h_r)}g)$, where $m < n$ so that there is at least one $h_i > 1$.

\begin{example} Let $\KK$ be the usual ordered field $\widehat {\QQ(\e)}$ and put: $\omega = \e^{-1}$. Set: $$S(X) = \e-\omega X+\omega^2 X^2+\sum_{n = 3}^{\infty}\e^nX^n, T(X) = \sum_{n=0}^{\infty}X^n.$$
We easily see that $S(0) = \e, T(\e) = (1-\e)^{-1}-1-\e^2, S(1) = \e+\sum_{n = 3}^{\infty} \e^n$.

Therefore $T(S(1))$ (composite function $T \circ S$ computed at $1$) does exist. We want to show that $1$ does not belong to the domain of the power series $T(S(X))$ obtained by formal substitution. To this purpose we recall that $T \circ S$ and $T(S(X))$ coincide on a neighbourhood of $0$ (see \ref{substitution2}), so that the {\it expected} coefficients of $T(S(X))$ can be obtained by means of the Fa\`a di Bruno formula (see \cite  {CMV1}   ).  It  is therefore enough to show that the sequence $D^{n}(T \circ S)(0))$ does not converge to $0$ when $n$ tends to $\infty$. In fact it holds: $$S'(0) = -\omega, S"(0) = 2\omega^2, S^{(n)}(0) = n!\e^n, \forall n \ge 3,$$ while $T^{(n)}(S(1))$ differs from $n!$ by an infinitesimal element.

Now we use the Fa\`a di Bruno formula (\cite {Mennucci}) to see that $D^{n}(T o S)(0)$ is the sum of finitely many elements, among which $T^{(n)}(S(1))S'(0)^n$ is an infinite of the same order as $\omega^n$, hence larger that the order of all other elements (not exceeding the order of $\omega^{n-1}$). Therefore  $\lim_{n \to \infty}D^{n}(T o S)(0)) = \infty.$ 

We want to point out that the use of the Fa\`a di Bruno formula can, in this case,  easily be avoided by a direct computation which shows that in the $n^{th}$ derivative of $T \circ S$ the dominant term is of the form $T^{(n)}(S')^n$.

We wanto to point out that, since  the composite function $T \circ S$ is defined at $1$, it can be expanded into a power series around $1$ itself. But such a power series must have a domain with empty intersection with the domain of $T(S(X))$ ( if two power series expanding the same function are centered at different points and have intersecting domains, they have the same domain  (\cite{CMV1}). This differs from the archimedean situation.
\end{example}

\begin{remark}

As for a formal double series with domain larger than the domain of the composite function, we will see in the last section of this paper an example of a double series $T(S(X))$ with the following properties:

- $T(S(X))$ is a polynomial, so that its domain is the whole of $\KK$,

- $T\circ S$ is defined on a domain larger than the set $\{0\}$ but different from the whole of $\KK$.

\end{remark}

\begin{remark} \label{complex}

Corollary \ref{substitution2} has the following analog concerning power series on the field of complex numbers, i.e. analytic functions:

{\it{Consider the power series with complex coefficients $S(X)=\sum_ja_jX^j$ and $T(X)=\sum_ib_jX^i$. If their convergence radius are positive, and if $S(0)$ is strictly internal to the convergence circle of $T(X)$, then $T(S(z))=\sum_j d_jz^j$ (where $d_j$ are the expected coefficients, with converging series) , if $z$ belongs to a suitable neighborough of $0$.}}

The proof in the case $a_0 = 0$ is contained in \cite  {Cartan} Chap. I.2. In the case $a_0 \ne 0$ we can argue as follows.

Name $r_1$ and $r_2$ the convergence radii of $S(X)$ and $T(X)$, respectively. Since $S(0) =a_0$, by hypothesis we have $|a_0|<r_2$. Consider the two series 
$\overline S (X) = \sum_j {|a_j |X^j } $ and $\overline T (X) = \sum_i {|b_i |X^i }$, which are different from $S(X)$ and $T(X)$, but have the same convergence radii.
Since $\overline S(0) = |a_0|<R_2$, $\overline T(\overline S(z))$ is convergent at $z=0$, and also when $z$ satisfies to $\overline S(z) < r_2$. Since $\overline S(0) < r_2$ and $\overline S(z)$ is a continuous functions, this appens for $|z|<k$, for suitable $k$. Then the series $\overline T(\overline S(z))$ is convergent for $|z|<k$, and consequently 
$\overline T(\overline S(z))=\sum_i|b_i|(\sum_j|a_j|z|^j)^i$ is convergent. Then the conclusion follows by an application of Theorem \ref{substitution}, which is well known for the complex case (see \cite {Goursat}).
\end{remark}

\begin{remark}

Condition $a_0 = 0$ has the following consequence:   the expected coefficients $d_j=\sum_ib_ic_{ij} $, that a priori  are series, become  finite sums, and therefore the substitution of a series into another series can be performed disregarding convergence.

The above Theorem \ref{substitution2} ensures that, when $a_0 \ne 0$, convergence of $T(a_0)$ is necessary and sufficient to have convergent expected coefficients and so to present $T(S(X))$ as a formal power series, converging in a suitable neighbourhood of $0$.

We also want to observe that, if $d_0$ is a converging series, then all the $d_j$'s are converging series.

\end {remark}


\begin{thebibliography}{99}

 



\bibitem{Bourbaki}
N. Bourbaki
\emph{\' El\'ements de Math\'ematiques, Livre II, Alg\`ebre, chap. 6, Groupes et corps ordonn\' es},
Hermann, Paris  ~(1964) 

\bibitem{Cartan}
Henri Cartan
\emph{Theorie elementaire des fonctions analytiques d'une ou plusieurs variables complexes},
Hermann, Paris 1961


 
   
\bibitem{CMV1}
L. Corgnier, C. Massaza, P. Valabrega
\emph{ On the intermediate value theorem over a non-Archimedean field},
Le matematiche, Catania  ~(2013) , vol. LXIII , Fasc. II, pp. 227-248
  
\bibitem{Goursat}
E. Goursat,
\emph{A course in Mathematical Analysis- vol. I -translated by E. R. Hedrick},
 Dover Publications, Inc, New York ~(1904) 
  

   
\bibitem{Kelley}
  J.L.Kelley
   \emph{ General topology },
Springer Verlag,  Berlin Heidelberg New York Tokyo~(1955) 
 
\bibitem{Massaza}
C. Massaza
\emph{Sul completamento dei campi ordinati},
Rend. Sem. Mat. Univ. Polit. Torino \textbf{29}~(1969-70), 329--348

\bibitem{Massaza2}
C. Massaza
\emph{Serie di potenze a coefficienti in un campo non archimedeo},
Riv. Mat. Univ. Parma  \textbf{11}~(1970), 133--157

 \bibitem{Mennucci}
Mennucci,A. An intuitive presentation of Fa\`a di Bruno's formula, Scuola Normale Superiore, Pisa, April 5, 2011



   
    
\bibitem{Zeta}
El Jury ,Theory and Applications of the z-Transform Method , John Wiley  $\&$ Sons, NY, 1964




\end{thebibliography}
 \end{document}